\documentclass{article}

\usepackage[margin=3.2cm]{geometry}

\usepackage{setspace}
\usepackage{graphics}

\usepackage{docmute}
\usepackage[utf8]{inputenc}

\usepackage{mathtools}
\usepackage{amsmath}
\usepackage{amsfonts}
\usepackage{amssymb}
\usepackage{amsthm}
\usepackage{aliascnt}

\usepackage{makeidx}
\makeindex

\usepackage{textcomp}
\usepackage[sb]{libertine}
\usepackage[varqu,varl]{zi4}%
\usepackage[libertine,bigdelims,vvarbb]{newtxmath}
\usepackage[supsfam=LibertinusSerif-Sup,supscaled=1.2,raised=-.13em]{superiors}
\usepackage{bm}
\useosf

\usepackage[breaklinks,linktocpage]{hyperref}
\usepackage[hyperpageref]{backref}

\usepackage{url}
\usepackage{breakurl}

\usepackage{tikz-cd}

\usepackage{enumitem}
\usepackage[UKenglish]{babel}
\usepackage[makeroom]{cancel}
\usepackage{pstricks-add}

\definecolor{shadecolor}{rgb}{0.88,0.91,0.95}
\usepackage{tcolorbox}

\usepackage{dynkin-diagrams}
\usetikzlibrary{backgrounds}

\usepackage{booktabs}

\usepackage{fancyhdr}
\newcommand{\Z}{\mathbb{Z}}

\newcommand{\R}{\mathbb{R}}

\newcommand{\spann}{\operatorname{span}}

\newcommand{\Stab}{\operatorname{Stab}}

\newcommand{\Ric}{\operatorname{Ric}}

\newcommand{\Id}{\operatorname{Id}}

\renewcommand{\|}[1]{\left| \left| #1 \right| \right|}

\newcommand{\<}{\langle}
\renewcommand{\>}{\rangle}

\newcommand{\vol}{\operatorname{vol}}

\newcommand\Item[1][]{% 
  \ifx\relax#1\relax  \item \else \item[#1] \fi
  \abovedisplayskip=0pt\abovedisplayshortskip=0pt~\vspace*{-\baselineskip}}

\usepackage[capitalise]{cleveref}

\numberwithin{equation}{section}

\newtheorem{proposition}[equation]{Proposition}
\crefname{proposition}{Proposition}{Propositions}

\crefname{lemma}{Lemma}{Lemmas}

\newtheorem{corollary}[equation]{Corollary}
\crefname{corollary}{Corollary}{Corollaries}

\newtheorem*{corollary*}{Corollary}
\crefname{corollary*}{Corollary}{Corollaries}

\newtheorem{theorem}[equation]{Theorem}
\crefname{theorem}{Theorem}{Theorems}

\crefname{exercise}{Exercise}{Exercises}

\crefname{task}{Task}{Task}

\crefname{conjecture}{Conjecture}{Conjectures}

\crefname{algorithm}{Algorithm}{algorithms}

\newtheorem*{theorem*}{Theorem}
\crefname{theorem*}{Theorem}{Theorems}

\crefname{claim}{Claim}{Claims}

\theoremstyle{remark}

\crefname{question}{Question}{Questions}

\crefname{definition}{Definition}{Definitions}

\crefname{example}{Example}{Examples}

\crefname{remark}{Remark}{Remarks}

\crefname{assumption}{Assumption}{Assumptions}

\crefname{problem}{Problem}{Problems}
\usepackage{subcaption}

\author{Daniel Platt}
\date{\today}
\title{Non-uniqueness and symmetries for the Nirenberg problem using computer assistance}

\makeatletter
\@ifundefined{code}{%
  \newcommand{\code}[1]{\texttt{\detokenize{#1}}}%
}

\usepackage{booktabs}
\usepackage{siunitx}
\usepackage{float}
\begin{document}

\maketitle

\begin{abstract}
    We apply verified numerics to the Nirenberg problem, proving that a genuine solution exists near two given computer-generated approximate solutions.
    This proves existence of a solution for a particular prescribed curvature that was previously predicted, but not proved, to exist.
    We are also able to determine the symmetry groups of the genuine solutions exactly, which in one case is different from the symmetry of the prescribed curvature.
    We expect the computer code for this proof can be reused to study other aspects of the Nirenberg problem.
\end{abstract}

\tableofcontents

\section{Introduction}

The classical \emph{Nirenberg problem} asks:
which functions $K$ on $S^2$ are realised as the curvature of a metric $g$ on $S^2$ that is pointwise conformal to the round metric $g_{\text{round}}$ of radius $1$?
Writing $g=e^{2u} g_{\text{round}}$, the curvature condition is equivalent to $u$ solving the equation
\begin{align}
\label{equation:nirenberg-pde}
    \mathscr{F}(u)
    :=
    1-\Delta u-K e^{2u}
    =0.
\end{align}
The problem has received much attention over the years.
An early landmark paper is \cite{Koutroufiotis1972}, in which PDE techniques were applied to the problem that would become standard in later treatments, and a sufficient condition for existence was proved.
Other far-reaching sufficient conditions in the literature are \cite{Moser1973,Chen1987}.

Abstractly, the moduli space of solutions is quite well understood. 
By the one–point concentration theory in dimension $2$ from \cite{Chang1987,Chang1988}, one may compactify the space of solutions in a natural sense.

In practice, it is hard to decide for a given function $K$ whether a solution exists, and how many.
For $K \equiv 1$, an infinite family of solutions exhibited in \cite{Onofri1982} exists.
In \cite{Anderson2021}, bifurcation of solutions was studied, which can lead to situations with non-unique solutions.
In particular, many $K$ near $K \equiv 1$ admit an even number of solutions, i.e. at least two.
Proving \emph{non-uniqueness} of solutions is one of the central problems in the area, see e.g. Struwe's problem statement in \cite[pp.57-58]{Rassias2022}.

A different problem is studying the \emph{symmetries} of solutions:
pioneered in \cite{Moser1973}, it has become a common theme to prove existence of a solution under some symmetry assumption on $K$, see
\cite{Hong1986,Chang1988}.
In these constructions, the constructed solutions typically inherit the same symmetries as $K$.
An exception is \cite{Xu1993}, where symmetry of $K$ is used as an input, but the symmetry of the solution is not known in general.
That raises the question:
given a function $K$ which has a symmetry, do all solutions to \cref{equation:nirenberg-pde} have that symmetry?
The case $K \equiv 1$ shows that the answer to this question is \emph{No} in general, however this may be the only documented example of such symmetry breaking.

Approaching the problem from a different angle, different numerical methods have been applied to the problem.
In \cite{Sun2015,Campen2021}, numerics for a discrete version of the Nirenberg problem were implemented.
In \cite{Liu2025} (building on the works \cite{Luo2004,Bobenko2015}), discretisation was used to approximately solve the smooth version of the problem.
In \cite{Cortes2026}, machine learning was used to approximately solve the smooth version.
Therein, \emph{existence and non-existence} for several explicit $K$ is conjectured based on numerical results.

In this article, we make progress on these three problems of \emph{non-uniqueness, symmetry}, and \emph{existence} of solutions.
We consider $K=Y_{3,2}$, i.e. a real spherical harmonic of bi-degree $(3,2)$, shown in \cref{figure:spheres}.
It is symmetric under the tetrahedral group $T_d \subset O(3)$ of order $24$.
We show that:
five different solutions to \cref{equation:nirenberg-pde} exist, thereby proving a conjecture from \cite[Figure 7]{Cortes2026}, and study the symmetry groups of the solutions.
This is summarised in our main theorem:

\begin{theorem}
    \label{theorem:main-existence}
    Let $K=Y_{3,2}$ be a unit-norm spherical harmonic of bi-degree $(3,2)$.
    Then there exist at least five distinct solutions to \cref{equation:nirenberg-pde}.
    One solution has a $T_d$-symmetry and the other $4$ solutions have a $S_3$-symmetry only.
\end{theorem}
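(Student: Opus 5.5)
The strategy is a computer-assisted Newton–Kantorovich argument, applied separately to each approximate solution. Take a numerically computed approximate solution $\bar u$, say a truncated spherical harmonic expansion of degree $N$ with rational or interval coefficients. We then show that the map
\[
T(u) = u - A\,\mathscr{F}(u)
\]
is a contraction on a small ball $B_r(\bar u)$ in a suitable Banach space. Here $A$ is an approximate inverse of the linearisation $D\mathscr{F}(\bar u) = -\Delta - 2Ke^{2\bar u}$.

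For the function space we take a weighted Sobolev space such as $H^2(S^2)$, or equivalently a weighted $\ell^1$-type space of spherical harmonic coefficients. Since $\dim S^2=2$, $H^2 \hookrightarrow C^0$ is a Banach algebra, so $u\mapsto e^{2u}$ is smooth with explicitly boundable derivatives. We use the standard radii-polynomial criterion. Let
\[
Y \ge \|A\mathscr{F}(\bar u)\|,\qquad Z_1 \ge \|\Id - A\,D\mathscr{F}(\bar u)\|,\qquad Z_2(r) \ge \sup_{v\in B_r(\bar u)} \|A\,(D\mathscr{F}(v)-D\mathscr{F}(\bar u))\|/r.
\]
If $Y + (Z_1+Z_2(r))r < r$, then there is a unique zero of $\mathscr{F}$ in $B_r(\bar u)$.

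To build $A$, split the coefficients into low modes (degree $\le N$) and high modes. On the low modes, $A$ is a numerically inverted Galerkin matrix. On the high modes, $A$ acts as $(-\Delta)^{-1}$, i.e. multiplication by $1/(\ell(\ell+1))$. The multiplication by $2Ke^{2\bar u}$ is then a perturbation, of size $O(1/N^2)$ relative to $-\Delta$ on high modes. The bound $Y$ requires a rigorous enclosure of $e^{2\bar u}$. We obtain it either via a rigorous Taylor model or Chebyshev/interval quadrature on the sphere, or by bounding the tail of $e^{2\bar u}$ expanded in harmonics. All of this is done in interval arithmetic.

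\textbf{Symmetry.} It is cleanest to work equivariantly. $\mathscr{F}$ is $G$-equivariant for any $G \subset O(3)$ preserving $K$. So for the $T_d$-symmetric solution we run the contraction argument in the $T_d$-invariant subspace. For the others we use the $S_3$-invariant subspace, with $S_3 \subset T_d$ the stabiliser of a vertex axis. This produces genuine solutions with at least that symmetry, and it also reduces matrix sizes.

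\textbf{Exact symmetry groups.} To show the four $S_3$-solutions have no larger symmetry, we repeat the argument in the full (non-symmetric) space to obtain uniqueness in a ball. We then exhibit an element $g\in T_d\setminus S_3$ such that $\|g\cdot\bar u-\bar u\| > 2r$, so $g\cdot u \ne u$. Any symmetry of a solution must also preserve $K$, because $K = e^{-2u}(1-\Delta u)$ and an isometry symmetric for $u$ commutes with $\Delta$. The isometry group of $Y_{3,2}$ is exactly $T_d$, so the symmetry group lies between $S_3$ and $T_d$. Since $S_3$ is maximal among the relevant subgroups of $T_d$ (index $4$), the symmetry group is exactly $S_3$. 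The $T_d$-orbit of such a solution has size $4$, which accounts for the four solutions.

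\textbf{Distinctness.} The four solutions are $g\cdot u$ for coset representatives $g$, and they are pairwise distinct by the ball separation above. The $T_d$-solution is distinct from them because its ball is disjoint from theirs, or simply because its symmetry group differs.

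The main obstacle is obtaining a sharp enough $Z_1$ bound. $D\mathscr{F}(\bar u)$ is expected to be invertible but possibly ill-conditioned, since these solutions are nearly degenerate and near bubbling, so $e^{2\bar u}$ may have large dynamic range. This forces a large truncation $N$ and careful rigorous tail estimates for the product $Ke^{2\bar u}v$. I would first attempt the $H^2$ weighted-coefficient norm with $N$ around $30$–$60$. If the bound fails, I would increase $N$ or switch to an $\ell^1$ coefficient norm with geometric weights.
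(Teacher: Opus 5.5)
Your outline is sound, but the linear analysis and the handling of symmetry differ from the paper's.

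\textbf{Linear analysis.} The paper does not run Newton--Kantorovich with an approximate inverse. It proves that the exact linearisation $\mathscr{L}$ is invertible and bounds $\|{\mathscr{L}^{-1}}_{L^2\to L^2}$ by a Schur complement on $H_{\le L}\oplus H_{>L}$. This uses an interval-matrix Neumann bound for the low block, $\gamma=\lambda(L+1)-\overline{\|{V}_{L^\infty}}$ for the high block, and only $\eta=\overline{\|{V}_{L^\infty}}$ for the coupling. It then upgrades this to $L^2\to H^2$ with the Bochner a priori estimate. Finally it applies Banach's theorem to $v\mapsto-\mathscr{L}^{-1}(\mathscr{F}(u_0)+\mathscr{N}(v))$ in the full space $H^2(S^2)$, centred at the $G$-average $u_0$. $G$-invariance then comes from uniqueness plus equivariance, not from restricting to $H^2_G$.

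The gain of the Schur complement is that the low--high coupling enters only quadratically, as $\eta^2/\gamma$, so crude $L^\infty$ bounds on $V=2Ke^{2u_0}$ suffice. In your $Y,Z_1,Z_2$ scheme the off-diagonal blocks $A_{\mathrm{low}}\Pi_{\le N}V\Pi_{>N}$ and $(-\Delta)^{-1}\Pi_{>N}V\Pi_{\le N}$ of $\Id-A\,D\mathscr{F}(\bar u)$ enter linearly. Your $O(1/N^2)$ remark covers only the high--high block. In the plain $H^2$ norm, bounding the high--low block via $\|{V}_{L^\infty}$ gives something of order $\|{V}_{L^\infty}$, here $26$ to $39$, which is not small. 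So you genuinely need the coefficient decay of $e^{2\bar u}$, or a norm rebalancing low against high modes. The resulting requirement is then of the same type as the paper's $s>0$, roughly $\lambda(N+1)\gtrsim\|{V}_{L^\infty}^2\|{A_{\mathrm{low}}}$.

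\textbf{What your route buys.} You get smaller matrices by working in $H^2_G$. On the symmetry side you also improve on the paper in two ways:
\begin{itemize}
\item Maximality of the vertex stabiliser $S_3$ in $T_d\cong S_4$ reduces the non-symmetry check to a single $g$. The paper checks the bound $\xi$ for every $g\in T_d\setminus G$.
\item Your orbit--stabiliser count produces the four $S_3$-solutions and separates them from the $T_d$-solution. The paper leaves this implicit, having constructed only two solutions.
\end{itemize}

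\textbf{Small corrections.}
\begin{itemize}
\item The extra full-space computation for ``uniqueness in a ball'' is unnecessary. The separation $\|{g\bar u-\bar u}>2r$ uses only $\|{u-\bar u}\le r$ and $g$-invariance of the norm.
\item That invariance holds for $H^2$ and $L^\infty$, but not for a weighted $\ell^1$ coefficient norm, since rotations act on each degree by Wigner matrices. If you switch norms, do the separation in $L^\infty$, as the paper does.
\item With your normalisation of $Z_2(r)$, the criterion should read $Y+(Z_1+rZ_2(r))\,r<r$. Your version is just stronger when $r<1$.
\item You need $A$ injective to turn fixed points of $T$ into zeros of $\mathscr{F}$.
\end{itemize}
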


\begin{figure*}[t]
\centering
\includegraphics[width=0.9\textwidth]{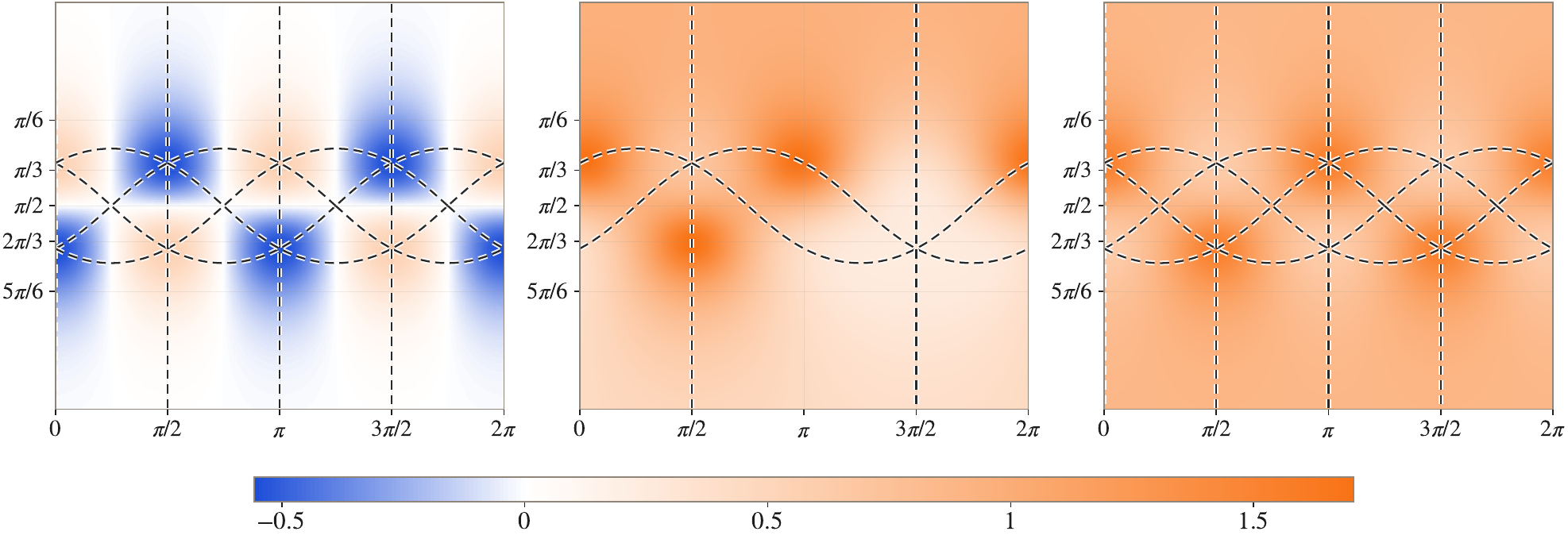}
\caption{Left to right: $K=Y_{3,2}$, an approximate solution to \cref{equation:nirenberg-pde} with $S_3$-symmetry, an approximate solution to \cref{equation:nirenberg-pde} with $T_d$-symmetry in Mercator projection. Dashed lines show the symmetry planes of the three functions. For $Y_{3,2}$ these are genuine symmetries. The approximate solutions are \emph{not} exactly invariant under these ambient symmetries. However, it is shown in the proof of \cref{theorem:main-existence} that genuine solutions to \cref{equation:nirenberg-pde} exist near the approximate solutions which have precisely the shown symmetry groups. The approximate solutions have the notation $u_0'$ in \cref{subsection:finding-an-approximate}.}
\label{figure:spheres}
\end{figure*}

The theorem is proved through a numerically verified proof.
First, approximate solutions $u_0$ (one with $D_4$-symmetry and one with $\mathbb{Z}_2$-symmetry) are computed.
Second, a bound for the residual $\mathscr{F}(u_0)$ and for the inverse of the linearised operator $\mathscr{L}^{-1}$ is proved.
This is done by combining an a priori estimate from standard elliptic analysis with a bound for the spectral gap.
If the residual is small compared to this operator norm, then it follows from a routine application of a fixed point theorem that a genuine solution to \cref{equation:nirenberg-pde} exists near $u_0$.

This scheme of approximating, estimating the linearised operator, and then perturbing to a genuine solution was pioneered for applications in geometry in \cite{Taubes1982} and has been applied to numerous problems since then.
These are usually pen-and-paper proofs, where the approximate solutions $u_0$ can be written down explicitly.
More recently, it has become possible to use computer-generated approximate solutions $u_0$ together with verified numerics to execute this scheme.
Applications in geometry are
\cite{Balazs2018,Wang2025,Reiterer2019} and \cite{Buttsworth2026}, which was an important influence for this work.

We then go one step further and determine the symmetry groups of our solutions.
Showing that a certain symmetry is present is standard in numerically verified proofs.
It is usually achieved by reducing the analysis to a subspace of symmetric functions, as done in all of the four numerically verified proofs in geometry listed above.
There is also some work on ruling out certain symmetries of solutions, see e.g. \cite{Arioli2015,Arioli2019}.
Though it was not pursued, precisely determining the stabiliser of a solution would have been possible in \cite{Salort2022}, but other than we are not aware of numerically verified proofs that achieve this.

Our approach is general and works for other choices of $K$ and other symmetry groups.
The provided computer code can be easily adapted to certify other numerical calculations pertaining to the Nirenberg problem.

The paper is structured as follows:
in \cref{section:background} we fix some notation for the Nirenberg problem and spherical harmonics.
In \cref{section:construction} we give the proof of \cref{theorem:main-existence}.
The explicit numerical values verified by our computer code have all been collected in \cref{section:appendix-1}.
Thus, many statements in the main body of the article are of the form \emph{``the quantity [\dots] is bounded by the value given in \cref{section:appendix-1}"}.
The Python code used to run the verified numerics can be found at \url{https://github.com/danielplatt/nirenberg-certified}.

\textbf{Acknowledgments.}
The author thanks Timothy Buttsworth for suggesting the proof of the a priori estimate \cref{equation:a-priori}. 
This research has been supported by EPSRC grant EP/Y028872/1, Mathematical Foundations of Intelligence: An Erlangen Programme for AI.

\section{Background}
\label{section:background}

\subsection{The Nirenberg Problem}

We decompose the partial differential equation from \cref{equation:nirenberg-pde} into linear and non-linear parts as follows.
For $u_0 \in C^\infty(S^2)$ we write

\begin{align}
\label{equation:linearisation-higher-order-def}
    \begin{split}
        \mathscr{F}(u_0+u)
        &=
        \mathscr{F}(u_0)
        +
        \mathscr{L}(u)
        +
        \mathscr{N}(u),
        \quad
        \text{where}
        \\
        \mathscr{L}(u)
        &=
        \Delta u-2K e^{2u_0} \cdot u,
        \\
        \mathscr{N}(u)
        &=
        -K e^{2u_0} (e^{2u}-1-2u).
    \end{split}
\end{align}

\subsection{Spherical Harmonics}
\label{subsection:spherical-harmonics}

For $l \geq 0$ there exist $2l+1$ spherical harmonics of degree $l$.
We choose the usual orthonormal basis 
\begin{align}
\label{theorem:spherical-harmonics}
Y_{\ell m}^{\mathbb{R}}(\theta,\phi)=
\begin{cases}
\sqrt{2}\,N_{\ell m}\,P_\ell^{m}(\cos\theta)\cos(m\phi), & m=1,\dots,\ell,\\[2mm]
N_{\ell 0}\,P_\ell(\cos\theta), & m=0,\\[2mm]
\sqrt{2}\,N_{\ell |m|}\,P_\ell^{|m|}(\cos\theta)\sin(|m|\phi), & m=-1,\dots,-\ell,
\end{cases}
\end{align}
\[
N_{\ell m}
=
\sqrt{\frac{2\ell+1}{4\pi}\frac{(\ell-m)!}{(\ell+m)!}},
\qquad
\ell=0,1,2,\dots
\]
from \cite[Equation 4.7]{Atkinson2012}.
In this formula, $P_l$ are Legendre polynomials and $P_l^{|m|}$ are associated Legendre functions.
The functions $Y_{lm}$ are eigenfunctions for the Laplace operator with eigenvalue $\lambda(l):=l(l+1)$.

The symmetry group of spherical harmonics was worked out in \cite{Altmann1957}.
Let
\[
R=\begin{pmatrix}
    \cos (\pi/4) & -\sin (\pi/4) &0 \\
    \sin(\pi/4) & \cos(\pi/4) &0 \\
    0&0&1
\end{pmatrix}
\]
be a rotation of $45^\circ$ degrees in the $(x,y)$-plane and write $(x,y,z)=R(u,v,w)$.
For the curvature prescriber $K=Y_{3,2}$ from \cref{theorem:main-existence} we get:

\begin{proposition}
\label{proposition:Y32-symmetry}
    For the symmetry group of $Y_{3,2}$ we have
    \[
    \Stab_{O(3)} Y_{3,2}
    =
    R T_d R^{-1},
    \]where
    \begin{align*}
        T_d
        &=
        \{
        \text{signed permutations of $(x,y,z)$ with an even number of sign changes}
        \}.
    \end{align*}
\end{proposition}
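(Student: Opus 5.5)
The plan is to write $Y_{3,2}$ as an explicit cubic polynomial, change to the rotated coordinates $(u,v,w)$ in which it becomes a multiple of $uvw$, and then compute the stabiliser of the monomial $uvw$ in $O(3)$ directly.

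\textbf{Step 1: explicit form of $Y_{3,2}$.} From \cref{theorem:spherical-harmonics} we have $Y_{3,2}=\sqrt2\,N_{32}\,P_3^2(\cos\theta)\cos(2\phi)$. Up to sign convention, $P_3^2(t)=15\,t(1-t^2)$. Hence
\[
\sin^2\theta\cos 2\phi=x^2-y^2,\qquad \cos\theta = z,
\]
so $Y_{3,2}=c\,z(x^2-y^2)$ restricted to $S^2$, for some nonzero constant $c$. Substituting $(x,y,z)=R(u,v,w)$ gives $x=(u-v)/\sqrt2$, $y=(u+v)/\sqrt2$ and $z=w$. Therefore $x^2-y^2=-2uv$, and so
\[
Y_{3,2}(Rp)=F(p),\qquad F(u,v,w):=-2c\,uvw .
\]

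\textbf{Step 2: reduction to $F$.} Write $Y_{3,2}=F\circ R^{-1}$. For $g\in O(3)$ we have $Y_{3,2}\circ g=Y_{3,2}$ if and only if $F\circ (R^{-1}gR)=F$. It follows that $\Stab_{O(3)}Y_{3,2}=R\,\Stab_{O(3)}(F)\,R^{-1}$, and it remains to show that $\Stab_{O(3)}(uvw)=T_d$.

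\textbf{Step 3: $T_d\subseteq\Stab(uvw)$.} A permutation of coordinates leaves the product $uvw$ unchanged. A sign change multiplies $uvw$ by the product of the signs, which equals $1$ exactly when the number of sign changes is even.

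\textbf{Step 4: $\Stab(uvw)\subseteq T_d$.} This is the main step, though it is not difficult. Let $A\in O(3)$ preserve $F$ on $S^2$; by homogeneity, $A$ then preserves $F$ on all of $\R^3$. Then $A$ preserves the zero set $\{uvw=0\}$, which is the union of the three coordinate planes. A linear map sends planes to planes, and the plane $A\{u=0\}$ is contained in the union of the three coordinate planes. A plane cannot be covered by finitely many other planes unless it equals one of them, so $A\{u=0\}$ is a coordinate plane. Since $A$ is injective, it permutes the three coordinate planes.

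Because $A$ is orthogonal, it also permutes their normal lines. Hence $A e_i=\pm e_{\sigma(i)}$ for some permutation $\sigma$, that is, $A$ is a signed permutation matrix. Evaluating $F$ at $(1,1,1)$ shows that the product of the signs must be $+1$. Therefore the number of sign changes is even, and $A\in T_d$.

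Combining Steps 2–4 gives $\Stab_{O(3)}Y_{3,2}=RT_dR^{-1}$. As a consistency check, this group has order $48/2=24$, as expected. The only points requiring care are the normalisation and sign convention of $P_3^2$ and the direction of conjugation in Step 2. Neither affects the result, since a nonzero scalar multiple of $uvw$ has the same stabiliser.
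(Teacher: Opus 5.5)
Your proposal is correct and follows the same route as the paper: write $Y_{3,2}=c\,z(x^2-y^2)$, pass to the rotated coordinates in which it becomes a multiple of $uvw$, identify the stabiliser of $uvw$ as $T_d$, and conjugate back by $R$. The only difference is that you prove the reverse inclusion $\Stab_{O(3)}(uvw)\subseteq T_d$ (via the zero-set argument on the coordinate planes and evaluation at $(1,1,1)$), whereas the paper only asserts this after citing Altmann for $T_d\subseteq\Stab_{O(3)}(uvw)$, so your version is more self-contained.
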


\begin{proof}
    A quick calculation shows that $Y_{32}=c \cdot z(x^2-y^2)$ in Cartesian coordinates for a constant $c \in \R$, see \cite[Table 7.1]{Ceulemans2024}.
    Thus, in coordinates in which $x$ and $y$ are rotated $45^\circ$, i.e. $u=\frac{x+y}{\sqrt{2}}$, $v=\sqrt{x-y}{\sqrt{2}}$, $w=z$, we have
    $Y_{32}=c \cdot uvw$, which has symmetry group $T_d$ (\cite[Table 15.3]{Altmann1957} states that $T_d$ is contained in the symmetry group, but the converse also holds).
    The map $R$ is a rotation of $x$ and $y$ by $45^\circ$, which proves the claim.
\end{proof}

\subsection{Geometric Analysis}

For $u \in C^\infty(S^2)$ and $s \in \Z_{\geq 0}$ we define its Sobolev norm to be
\[
\|{u}_{H^s}
:=
\sum_{j=0}^s
\|{\nabla^j s}_{L^2}.
\]
For this convention, we record here an explicit Sobolev embedding theorem:

\begin{proposition}
\label{proposition:sobolev-embedding}
    If $u \in H^2(S^2)$, then $u \in L^\infty(S^2)$ and
    \begin{align}
        \|{u}_{L^\infty}
        \leq
        C_{\text{emb}}
        \|{u}_{H^2},
    \end{align}
    where $C_{\text{emb}}=\sqrt{\frac{625}{592}} C_{2,0}$ and $C_{2,0}$ is defined in \cref{equation:C20-def}.
\end{proposition}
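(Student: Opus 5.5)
We will prove the estimate by expanding $u$ in the orthonormal basis of spherical harmonics from \cref{subsection:spherical-harmonics} and bounding the sup norm of each eigenspace component explicitly. Write $u=\sum_{\ell\ge 0} u_\ell$ with $u_\ell=\sum_{m} a_{\ell m} Y_{\ell m}^{\mathbb{R}}$. By the addition theorem, $\sum_m |Y_{\ell m}^{\mathbb{R}}(x)|^2=\frac{2\ell+1}{4\pi}$ at every point $x$. Cauchy--Schwarz then gives
\[
|u(x)|\le \sum_\ell \Bigl(\tfrac{2\ell+1}{4\pi}\Bigr)^{1/2}\Bigl(\sum_m a_{\ell m}^2\Bigr)^{1/2}.
\]
A second application of Cauchy--Schwarz, with weights $(1+\lambda(\ell))$ or more generally $(\alpha+\beta\lambda(\ell)+\gamma\lambda(\ell)^2)$, yields
\[
\|{u}_{L^\infty}^2\le \Bigl(\sum_\ell \frac{2\ell+1}{4\pi\,w(\ell)}\Bigr)\sum_\ell w(\ell)\sum_m a_{\ell m}^2 .
\]
The first factor converges as soon as $w(\ell)$ grows like $\ell^4$; for example $w(\ell)=(1+\lambda(\ell))^2$ works, since then the summand is $O(\ell^{-3})$. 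I expect $C_{2,0}$ from \cref{equation:C20-def} to be exactly a constant of this type: the optimal constant in $\|u\|_{L^\infty}\le C_{2,0}\|(1+\Delta)u\|_{L^2}$, or a similar weighted $\ell^2$ quantity. Its value can then be computed, or bounded rigorously by summing finitely many terms and estimating the tail with an integral.

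Second, we compare $\sum_\ell w(\ell)\|u_\ell\|_{L^2}^2$ with the paper's norm $\|u\|_{H^2}=\|u\|_{L^2}+\|\nabla u\|_{L^2}+\|\nabla^2u\|_{L^2}$. For the first two terms we have $\|u\|_{L^2}^2=\sum_\ell\|u_\ell\|^2$ and $\|\nabla u\|_{L^2}^2=\sum_\ell\lambda(\ell)\|u_\ell\|^2$. For the Hessian we integrate by parts and use the Bochner formula on the round sphere, where $\Ric=g$:
\[
\|\nabla^2u\|_{L^2}^2=\|\Delta u\|_{L^2}^2-\|\nabla u\|_{L^2}^2=\sum_\ell(\lambda(\ell)^2-\lambda(\ell))\|u_\ell\|^2 .
\]
Hence everything is diagonal in $\ell$. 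Since $\|u\|_{H^2}$ is a sum of three norms rather than the square root of a sum of squares, we bound it below via $(a+b+c)^2\ge a^2+b^2+c^2$. This gives
\[
\|u\|_{H^2}^2\ge \sum_\ell\bigl(1+\lambda(\ell)^2\bigr)\|u_\ell\|^2 .
\]

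Third, we match constants. We need $\frac{625}{592}$ to be the supremum over $\ell$ of $w(\ell)/(1+\lambda^2)$, where $w$ is the weight appearing in $C_{2,0}$. For instance, with $w(\ell)=(1+\lambda)^2$, the ratio $(1+\lambda)^2/(1+\lambda^2)$ over $\lambda\in\{0,2,6,12,\dots\}$ is $1,\ 9/5,\ 49/37,\dots$. This does not give $625/592$, so the weight is presumably a different quadratic in $\lambda$, and the factor $625/592$ arises as its maximum ratio. Note that $592=16\cdot 37$ and $1+\lambda^2=37$ at $\lambda=6$, i.e. $\ell=2$. So one evaluates the ratio at the first few $\ell$, checks that it is maximised at a small $\ell$ such as $\ell=2$, and shows monotonic decay afterwards.

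The main obstacle is this bookkeeping step. We have to identify the precise weight behind $C_{2,0}$ and verify the maximisation of the ratio over the discrete spectrum. This is elementary but has to be done with exact rational arithmetic. The analytic content, namely the addition theorem and the Bochner identity, is standard.
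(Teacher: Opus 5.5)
Your first two steps are correct and follow the paper's route. The paper cites Hesse's inequality $\|{u}_{L^\infty}\le C_{2,0}\|{u}_{H^2,\text{spectral}}$, whereas you re-derive it from the addition theorem and Cauchy--Schwarz. In the Bochner step you are more careful than the paper. It writes $\|{u}_{H^2}^2=\|{u}_{L^2}^2+\|{\Delta u}_{L^2}^2$, but for its norm (a sum of three $L^2$ norms) only ``$\ge$'' holds, as your $(a+b+c)^2\ge a^2+b^2+c^2$ shows.

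The gap is the third step, which you leave open. You never exhibit a weight $w$ for which the resulting constant $\bigl(\sup_\ell \frac{w(\ell)}{1+\lambda(\ell)^2}\sum_\ell\frac{2\ell+1}{4\pi w(\ell)}\bigr)^{1/2}$ is shown to be at most $C_{\text{emb}}$. Instead you try to reproduce $625/592$ exactly, and you pin down neither the weight nor the maximiser. You also discard $(1+\lambda)^2$ because its ratio is $9/5$. As written, the proposal proves the embedding with \emph{some} constant, not the stated one.

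The weight behind $C_{2,0}$ is $w(\ell)=(\ell+\tfrac12)^4=(\lambda(\ell)+\tfrac14)^2$, a quadratic in $\lambda$ as you guessed; the spectral norm is $\|{(\Delta+\tfrac14)u}_{L^2}$, not $\|{(1+\Delta)u}_{L^2}$. Your first factor then equals $\sum_\ell\frac{2\ell+1}{4\pi(\ell+1/2)^4}=\frac{1}{2\pi}\sum_\ell(\ell+\tfrac12)^{-3}=C_{2,0}^2$. The paper's displayed definition of $C_{2,0}$ omits this square root, but the tabulated $C_{\text{emb}}\approx1.1891$ uses it. The statement holds under either reading, since the bracket is $\approx1.34>1$.

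For $f(\lambda)=(\lambda+\frac14)^2/(1+\lambda^2)$ one has $f'(\lambda)=2(\lambda+\frac14)(1-\frac{\lambda}{4})/(1+\lambda^2)^2$. So $f$ increases up to $\lambda=4$ and decreases afterwards. On the spectrum, $f(0)=\frac1{16}$, $f(2)=\frac{81}{80}$ and $f(6)=\frac{625}{592}$, so the supremum is $\frac{625}{592}$, attained at $\ell=2$. This recovers the paper's constant exactly.

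Matching $\frac{625}{592}$ is in fact unnecessary, because the statement is only an upper bound. The weight Bochner gives you, $w=1+\lambda^2$, has ratio $1$. Since $\frac{2\ell+1}{\ell^2(\ell+1)^2}=\frac{1}{\ell^2}-\frac{1}{(\ell+1)^2}$ telescopes, $\sum_\ell\frac{2\ell+1}{4\pi(1+\lambda(\ell)^2)}<\frac{1}{4\pi}(1+1)=\frac{1}{2\pi}$. Hence $\|{u}_{L^\infty}\le(2\pi)^{-1/2}\|{u}_{H^2}\approx0.40\,\|{u}_{H^2}$, which is stronger than the stated bound with $C_{\text{emb}}\approx1.19$. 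Even your discarded trial weight $(1+\lambda)^2$ gives at most $\sqrt{9/(10\pi)}\approx0.54$. The paper's constant is loose mainly because Hesse's weight gives the constant mode only $(\frac12)^4=\frac1{16}$.
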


\begin{proof}
    For $u=\sum_{l=0}^\infty \sum_{m=-l}^l a_{lm} Y_{lm}$ written as a sum of spherical harmonics, the spectral Sobolev norm is defined as
    \[
    \|{u}_{H^2,\text{spectral}}^2
    =
    \sum_{l,m} \left( l+\frac{1}{2} \right)^4 |a_{lm}|^2,
    \]
    see e.g. \cite[p.4]{Hesse2005}.
    By \cite[Equation 2.3]{Hesse2005}:
    \begin{align}
        \|{u}_{L^\infty}
        \leq
        C_{2,0}
        \|{u}_{H^2,\text{spectral}},
    \end{align}
    where 
    \begin{align}
        \label{equation:C20-def}
        C_{2,0}=\left( \frac{1}{2\pi} \sum_{l=0}^\infty \left(
        l+\frac{1}{2}
        \right)^{-3}
        \right).
    \end{align}
    The different definitions of Sobolev norms are well known to be equivalent, and we will derive one explicit norm equivalence constant here.
    One checks using a computer algebra system that
    \[
    \left(
    l+\frac{1}{2}
    \right)^4
    \leq
    \frac{625}{592}
    (1+l^2(l+1)^2)
    \quad
    \text{ for all }
    l \geq 0.
    \]
    Integrating the Bochner formula \cite[Equation 4.5.3]{Jost2005} and using $\Ric=1$ on $S^2$ gives
    \[
    \|{\nabla^2 u}_{L^2}^2+\|{\nabla u}_{L^2}^2
    =
    \|{\Delta u}_{L^2}^2,
    \]
    i.e.
    \begin{align*}
        \|{u}^2_{H^2}
        &=
        \|{u}^2_{L^2}
        +
        \|{\Delta u}^2_{L^2}
        =
        \sum_{l,m}
        (1+\lambda_l^2) |a_{lm}|^2
        =
        \sum_{l,m}
        (1+l^2(l+1)^2) |a_{lm}|^2.
    \end{align*}
    Hence,
    \begin{align*}
        \|{u}_{L^\infty}^2
        &\leq
        C_{2,0}^2
        \|{u}_{H^2,\text{spectral}}^2
        \leq
        C_{2,0}^2
        \sum_{l,m} \left( l+\frac{1}{2} \right)^4 |a_{lm}|^2
        \leq
        C_{2,0}^2
        \frac{625}{592}
        \sum_{l,m}
        (1+l^2(l+1)^2) |a_{lm}|^2
        =
        C_{2,0}^2
        \frac{625}{592}
        \|{u}_{H^2}^2,
    \end{align*}
    which proves the claim.
\end{proof}

An upper bound for the constant $C_{\text{emb}}$ is computed by the function \code{_2_3_C_emb}.

\subsection{Rigorous computation}

\paragraph{Interval arithmetic.}
Typical computer arithmetic using floating point numbers is subject to rounding errors and is not suitable for numerically verified proofs without additional processing.
It is standard in verified computing to use interval arithmetic for this.
That is:
numbers are treated as uncertain, represented by an interval guaranteed to contain the exact unknown value, and all operations (such as addition and multiplication) keep track of this uncertainty, producing a new interval.
\cite[Section 2.5]{Moore2009} surveys the rich history of this idea.
In practice, we use the \texttt{flint} Python-wrapper from \cite{Johansson2025} for the \texttt{C}-package \texttt{arb} \cite{Johansson2017}.

\paragraph{Quadrature.}
We often need to compute integrals of spherical harmonics.
For example, when expressing the product of two spherical harmonics as a linear combination of spherical harmonics, one computes the linear coefficients as integrals.
Quadrature with guaranteed error bounds works in great generality.
However, using Gauss-Legendre quadrature we make use of the special structure of spherical polynomials that does not require this general theory.
For spherical polynomials of degree $2k-1$, this quadrature gives the exact integral if one chooses at least $k \times 2k$ quadrature points, see \cite[Theorem 5.4]{Atkinson2012}.
This has previously been employed in \cite[p.2581]{Wieczorek2018}.
This applies in particular to spherical harmonics.

\section{Construction of the solution}
\label{section:construction}

We will prove \cref{theorem:main-existence} by constructing an approximate solution and using the Banach fixed point theorem to show that a nearby genuine solution exists.
The proof is structured in five steps, each making up one of the subsections in this section.

Throughout, we will use the notation $\overline{x}$ for a rigorous, non-sharp upper bound of some quantity $x$ that will be computed numerically.

\Cref{theorem:main-existence} claims the existence of (up to symmetry) two different solutions to \cref{equation:nirenberg-pde}.
Beginning from two different approximate solutions, the existence proof is completely analogous for both.
In the section text we will only refer to the approximate solutions as if there was only one, writing $u_0$ as a placeholder for either of the two approximate solutions.
The two candidates satisfy different rigorous estimates, and those are distinguished in \cref{table:numerical-values}.

\subsection{Finding an approximate solution $u_0$}
\label{subsection:finding-an-approximate}

In this section, we briefly discuss the approximate solutions $u_0$ to \cref{equation:nirenberg-pde}.
We note that for the rest of the proof it is not important how $u_0$ was obtained and there are many viable numerical schemes to achieve this.
We require no theoretical guarantees for our solver:
the usual questions of convergence and accuracy play no role, because all our analysis is \emph{a posteriori} on $u_0$ only.
We do not supply the computer code of our numerical solver, but only the two different approximate solutions $u_0$ in the hope this may simplify the review process.

It is convenient if $u_0$ is represented as a sum of spherical harmonics, and because of this we implement (a basic version of) the pseudospectral method from \cite{Pfeiffer2003}.
We obtain two candidate solutions $u_0'$ which are finite sums of spherical harmonics up to some chosen degree $N$, i.e.
\begin{align}
\label{equation:u0-linear-combi}
u_0'
=
\sum_{l=0}^N
\sum_{m=-l}^l
c_{lm} Y_{lm}(x),
\end{align}
where $c_{lm} \in \mathbb{Q}$ are \texttt{float64} coefficients obtained by the numerical solver.
(Of course, the two candidate solutions have different coefficients, but we abide by our convention of using the same symbol for both.)

If we were not interested in symmetries of our approximate and exact solutions, then we could immediately apply all steps of \cref{subsection:rigorous-bounds} and later to $u_0'$.
Since we are interested in symmetries, we apply one rather lengthy pre-processing step to obtain $u_0$.

We computed $\Stab_{O(3)} Y_{3,2}$ in \cref{proposition:Y32-symmetry}.
For each candidate $u_0'$ we choose a subgroup $G \subset \Stab_{O(3)} Y_{3,2}$ of expected symmetries.
We choose $G=\Stab_{O(3)} Y_{3,2}$ for one of our candidate solutions and $G= R S_3 R^{-1}$ for the second of our candidate solutions, where $S_3$ is realised as
\[
S_3
\cong
R c \< a, b \> c^{-1} R^{-1},
\quad
a(u,v,w)=(v,w,u),
\quad
b(u,v,w)=(v,u,w),
\quad
c(u,v,w)=(-u,-v,w).
\]
Note than $u_0'$ is not exactly fixed by $G$.
Also, it would be possible that $u_0'$ has additional, accidental symmetries that are not in $\Stab_{O(3)} Y_{3,2}$.
Define
\begin{align}
\label{equation:u0-def}
u_0
:=
\frac{1}{|G|}
\sum_{g \in G}
g^* u_0'
\subset 
C^\infty(S^2).
\end{align}
Computing $g^* u_0'$ in coefficient space introduces irrational numbers.
We save $u_0$ by giving its coefficients as \emph{intervals}, hence the notation $\subset$ rather than $\in$.
The (intervals for the) function $u_0$ are computed in \code{_3_1_make_symmetric}.
The intervals for the function $u_0$ are stored in
The following proposition collects the facts we will later use about $u_0$:

\begin{proposition}
    \label{proposition:symmetry}
    There exists $u \in u_0$ such that $g^*u=u$ for all $g \in G$.
    Furthermore, for $g \in T_d$, $g \notin G$, we have that
    \[
    \|{u_0-g^* u_0}_{L^\infty}
    \geq
    \xi,
    \]
    meaning that this inequality holds for all $u \in u_0$, where $\xi$ is given in \cref{table:numerical-values}.
\end{proposition}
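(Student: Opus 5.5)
The proof splits into the two claims of the statement. Both reduce to properties of the exact averaging operator and of the interval enclosure produced by \code{_3_1_make_symmetric}.

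\emph{First claim (existence of a $G$-invariant element of $u_0$).} The plan is to exhibit an explicit element. Define
\[
u^\ast := \frac{1}{|G|}\sum_{g\in G} g^*u_0',
\]
computed in exact arithmetic from the \texttt{float64} (hence rational) coefficients $c_{lm}$. Since $G$ is a finite group, $h^*u^\ast = \frac{1}{|G|}\sum_{g} h^*g^*u_0' = \frac{1}{|G|}\sum_{g}(gh)^*u_0' = u^\ast$ for every $h\in G$, because $g\mapsto gh$ is a bijection of $G$. It then remains to check $u^\ast\in u_0$, i.e.\ that each exact coefficient of $u^\ast$ lies in the corresponding interval. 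This holds by the inclusion property of interval arithmetic, provided the code has the following structure:
\begin{itemize}[label={}]
\item (i) each $g\in G$ acts on the span of $\{Y_{lm}\}_{m=-l}^{l}$ for fixed $l$ by a Wigner-type orthogonal matrix, whose entries are enclosed in intervals (they involve $\cos(\pi/4)$, $\sqrt2$, and the like);
\item (ii) these matrices are multiplied with the coefficient vector and averaged using interval operations.
\end{itemize}
One point needs care. Degree is preserved by $O(3)$, so $u^\ast$ has the same maximal degree $N$ as $u_0'$ and no truncation occurs. The only thing to verify in the code is that every operation, including the construction of the representation matrices, is carried out in \texttt{arb} ball arithmetic starting from the exact inputs.

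\emph{Second claim (lower bound $\xi$).} For each of the finitely many $g\in T_d\setminus G$ (after conjugating by $R$ as in \cref{proposition:Y32-symmetry}), and for every $u\in u_0$, I need a lower bound on $\|u-g^*u\|_{L^\infty}$. The approach is to bound the sup norm from below by the value at a single point:
\[
\|u-g^*u\|_{L^\infty}\ \ge\ |u(p)-u(g p)|.
\]
Here $p$ is a chosen point, picked from the floating-point approximation as a near-maximiser of $|u_0'-g^*u_0'|$. One then evaluates $u_0(p)-u_0(gp)$ in interval arithmetic, using the interval coefficients of $u_0$ and enclosures of the finitely many $Y_{lm}(p)$ and $Y_{lm}(gp)$. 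This yields an interval $I_g$ containing $u(p)-u(gp)$ for all $u\in u_0$ simultaneously. Setting $\xi:=\min_g \inf|I_g|$ gives the claim.

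An alternative avoids choosing points. Since $\|f\|_{L^\infty}\ge \|f\|_{L^2}/\sqrt{4\pi}$, one can use the lower endpoint of the interval enclosure of $\big(\sum_{lm}|a_{lm}|^2\big)^{1/2}/\sqrt{4\pi}$, where $a_{lm}$ are the interval coefficients of $u_0-g^*u_0$. This is cleaner, and I would use it if the resulting $\xi$ is large enough for later purposes.

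The main obstacle is ensuring the interval widths stay small. The coefficients of $u_0$ are intervals, and subtracting $g^*u_0$ can blow up widths. If $\xi$ is to be positive, the symmetry-breaking difference must dominate these widths. This requires $u_0'$ to be genuinely far from $g$-invariant, which holds for the $S_3$ candidate relative to $T_d\setminus S_3$ by the numerics. It also requires the representation matrices to be computed with sufficient precision; I would first try raising the \texttt{arb} working precision.
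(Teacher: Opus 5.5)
Your proposal is correct and matches the paper's argument. The paper likewise gets the first claim from the construction of $u_0$ as an interval enclosure of the exact group average $\frac{1}{|G|}\sum_{g\in G} g^*u_0'$, and the second from a certified interval-arithmetic computation. The paper delegates that second check to code without describing the method; your pointwise bound $\|{u-g^*u}_{L^\infty}\geq |u(p)-u(gp)|$, or the $L^2$-based bound via $\|{f}_{L^\infty}\geq \|{f}_{L^2}/\sqrt{4\pi}$, is a valid way to carry it out.
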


\begin{proof}
    The first point holds by construction of $u_0$, the second is verified in \code{_3_1_check_nonsymmetry}.
\end{proof}

\subsection{Rigorous bounds for $u_0$}
\label{subsection:rigorous-bounds}

\begin{proposition}
    The approximate solution $u_0$ satisfies $\|{\mathscr{F}(u_0)}_{L^2} \leq \overline{\|{\mathscr{F}(u_0)}_{L^2}}$, where $\overline{\|{\mathscr{F}(u_0)}_{L^2}}$ is given in \cref{table:numerical-values}.
\end{proposition}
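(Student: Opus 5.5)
The plan is to bound the $L^2$ norm of $\mathscr{F}(u_0)=1-\Delta u_0-Ke^{2u_0}$ by splitting it into a part that is an exact finite sum of spherical harmonics and a part that is only controlled through a rigorous remainder estimate. Since $u_0=\sum_{l\le N}\sum_m c_{lm}Y_{lm}$ with interval coefficients, the term $1-\Delta u_0$ is handled exactly in coefficient space: $\Delta u_0=\sum \lambda(l)c_{lm}Y_{lm}$. Likewise $K=Y_{3,2}$ is a single basis element. The only non-polynomial ingredient is $e^{2u_0}$.

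The first step is to replace $e^{2u_0}$ by a polynomial. I would use the truncated Taylor series $P_M(u_0)=\sum_{k=0}^{M}(2u_0)^k/k!$. The Lagrange remainder gives the pointwise bound
\[
|e^{2u_0}-P_M(u_0)|\le \frac{(2\|u_0\|_{L^\infty})^{M+1}}{(M+1)!}e^{2\|u_0\|_{L^\infty}}.
\]
An upper bound on $\|u_0\|_{L^\infty}$ comes from the triangle inequality, $\|u_0\|_{L^\infty}\le \sum |c_{lm}|\,\|Y_{lm}\|_{L^\infty}$, together with the addition-theorem bound $\|Y_{lm}\|_{L^\infty}\le\sqrt{(2l+1)/4\pi}$. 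Alternatively, one could use \cref{proposition:sobolev-embedding}. This gives
\[
\|K(e^{2u_0}-P_M(u_0))\|_{L^2}\le \|K\|_{L^\infty}\sqrt{4\pi}\cdot(\text{remainder bound}),
\]
with $\|K\|_{L^\infty}\le\sqrt{7/4\pi}$.

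The second step treats $R:=1-\Delta u_0-K P_M(u_0)$, which is a spherical polynomial of degree at most $NM+3$. Its $L^2$ norm squared is $\int_{S^2}R^2$, the integral of a polynomial of degree at most $2(NM+3)$. Gauss–Legendre quadrature with at least $k\times 2k$ nodes, where $2k-1\ge 2(NM+3)$, computes this integral exactly, as cited in the paper. I would evaluate $R$ at the quadrature nodes in interval arithmetic, square, sum with the weights, and take a square root. The pointwise evaluation of $P_M(u_0)$ can be done directly, so no coefficient products are ever needed. The final bound is $\overline{\|\mathscr{F}(u_0)\|_{L^2}}=\sqrt{\overline{\int R^2}}+\overline{\text{remainder}}$, by the triangle inequality. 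Interval coefficients automatically make the bound valid for every $u\in u_0$.

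The main obstacle is cost versus sharpness. $NM$ may be large, so the number of quadrature nodes grows like $(NM)^2$. I would choose $M$ just large enough that the Taylor remainder is negligible compared with the expected residual. If that is too expensive, there is a fallback that avoids high-degree quadrature: evaluate $e^{2u_0}$ directly on a grid in interval arithmetic, then control the projection error onto degree $\le L$ harmonics through derivative bounds. I expect the Taylor route to suffice, since $\|u_0\|_{L^\infty}$ should be moderate.
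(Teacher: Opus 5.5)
Your proposal is correct and is essentially the paper's proof: the same splitting $\mathscr{F}(u_0)=R_p-T_p$ via a degree-$p$ Taylor truncation of $e^{2u_0}$, exact product Gauss--Legendre/trapezoidal quadrature in interval arithmetic for $\|{R_p}_{L^2}^2$, the Lagrange-remainder bound for the tail $T_p$, and the triangle inequality. The differences are only in auxiliary constants. The paper bounds $\|{u_0}_{L^\infty}$ by evaluating $u_0$ on a grid and applying the Lipschitz constant $\sum_l |c_l|_2\sqrt{l(l+1)(2l+1)/4\pi}$ from the gradient addition identity, which is typically sharper than your coefficient-sum or Sobolev bound; a cruder bound forces a larger Taylor degree and hence far more quadrature nodes, and the same sup bound is reused later for $V$. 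The paper also uses $\|{K}_{L^2}=1$ in the tail instead of $\sqrt{4\pi}\,\|{K}_{L^\infty}$.
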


\begin{proof}
\textbf{Bounds for $K$.}
We first note that
\begin{align}
\label{equation:K-bound}
\|{K}_{L^\infty}
=
\max _{x \in S^2} Y_{3,2}(x)
=
\sqrt{\frac{105}{16 \pi}}
\max_{x \in [-1,1]}
|x(1-x^2)|.
\end{align}
This immediately gives
\begin{align}
\|{K}_{L^2}
\leq
\sqrt{\vol(S^2)} \|{K}_{L^\infty}
=
2 \sqrt{\pi} \|{K}_{L^\infty}.
\end{align}
Upper bounds $\overline{\|{K}_{L^2}}$ and $\overline{\|{K}_{L^\infty}}$ are computed in \code{_3_2_K_bound_L_infty} and \code{_3_2_K_bound_L_2} respectively.

\textbf{Bounds for the residual.}
Write $c_l=(c_{l,-l},\dots,c_{l,l}) \in \R^{2l+1}$.
\cite{Monteverdi2024} gives (when choosing $s=0$ in their notation):
\[
\sum_{m=-l}^l
|\partial_\theta Y_{lm}|^2
=
\frac{2l+1}{8\pi} l(l+1),
\quad
\sum_{m=-l}^l
m^2 |Y_{lm}|^2
=
\frac{2l+1}{8\pi} l(l+1) \sin^2 \theta.
\]
Using
\[
|\nabla Y(\theta,\phi)|^2
=
|\partial_\theta Y(\theta,\phi)|^2 + \frac{1}{\sin^2 \theta} |\partial_\phi Y(\theta,\phi)|^2
\]
gives the following \emph{vector addition identity} for spherical harmonics:
\[
\sum_{m=-l}^l
|\nabla Y_{lm}|^2
=
\frac{l(l+1)(2l+1)}{4\pi}.
\]
Thus,
\[
|\nabla u_0(x)|
=
\left|
\sum_{l=0}^N \sum_{m=-l}^l c_{lm}\,\nabla Y_{lm}(x)
\right|
\le
\sum_{l=0}^N
|c_l|_2\,
\sqrt{\frac{l(l+1)(2l+1)}{4\pi}}.
\]

Here, the right hand side is independent of $x$ and fully explicit.
We then use the Lipschitz method to bound $\|{u_0}_{L^\infty}$ (see e.g. \cite[Eqn. 4]{Paulavicius2009} for a description of this folklore approach):
evaluate $u_0$ on a grid on the sphere and use the Lipschitz bound to bound $u_0$ on points away from the grid.
The function \code{_3_2_U_bound_L_infty} computes the upper bound $\overline{\|{ u_0}_{L^\infty}}$ using this method.

To bound the residual $\mathscr{F}(u_0)$, write $E_p(x)$ for the degree $p$ truncation in the series expansion of $e^x$, and we get
\begin{align}
\label{equation:Rp-Tp}
    \mathscr{F}(u_0)
    =
    1-\Delta u_0-K e^{2u_0}
    =
    \underbrace{
    1-\Delta u_0 - K E_p(2u_0)
    }_{=:R_p}
    -
    \underbrace{
    K(e^{2u_0}-E_p(2u_0))
    }_{=:T_p}.
\end{align}
We compute an upper bound for
$\|{R_p}_{L^2}^2$ using rigorous quadrature.
We use the ``product Gaussian quadrature formula" \cite[Equation 5.12]{Atkinson2012}, i.e. in coordinates $\cos \theta \in (-1,1)$ and $\phi \in (0,2\pi)$ on $S^2$ we use Gauss-Legendre quadrature in the $\cos \theta$-direction and uniform (trapezoidal) quadrature in the $\phi$-direction.
Using $|e^x-E_p(x)| \leq e^{|x|} \frac{|x|^{p+1}}{(p+1)!}$ we estimate
\[
\|{T_p}_{L^2}
\leq
\|{K}_{L^2}
\cdot
e^{2 \overline{\|{u_0}_{L^\infty}}} \frac{(2 \overline{\|{u_0}_{L^\infty}})^{p+1}}{(p+1)!}.
\]
By definition, we have
\[
\|{\mathscr{F}(u_0)}_{L^2}
\leq
\|{R_p}_{L^2}+\|{T_p}_{L^2},
\]
where bounds for the right hand side were just computed.
These computations are carried out by \code{_3_2_Rp_bound}, \code{_3_2_Tp_bound}, and \code{_3_2_residual_bound}, respectively.
The latter computes $\overline{\|{\mathscr{F}(u_0)}_{L^2}}$.
\end{proof}

\paragraph{Bound for the potential.}
For later use we record an estimate for $V:=2 K e^{2u_0}$.
Namely, we immediately get
\begin{align}
\label{equation:V-bound}
\|{V}_{L^\infty}
\leq
\overline{\|{V}_{L^\infty}}
:=
2 \overline{\|{K}_{L^\infty}} e^{2 \overline{\|{u_0}_{L^\infty}}},
\end{align}
which is computed in the method \code{_3_2_V_bound}.

\subsection{Injectivity estimate for the linearised operator}

The linearisation of the operator $\mathscr{F}$ from \cref{equation:nirenberg-pde} at $u_0$ is, from \cref{equation:linearisation-higher-order-def},
$\mathscr{L}(u)
=
\Delta u - V \cdot u$,
where $V$ is the potential term defined above \cref{equation:V-bound}.

\subsubsection{An eigenvalue bound}

\begin{proposition}
    The smallest singular value of $\mathscr{L}$ is bounded from below by $\frac{1}{\alpha_{\text{inv}}}$, where $\alpha_{\text{inv}}$ is given in \cref{table:numerical-values}.
\end{proposition}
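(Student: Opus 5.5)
We plan to prove a computer-assisted lower bound on the spectrum of the self-adjoint operator $\mathscr{L}=\Delta-V$ on $L^2(S^2)$, with $V=2Ke^{2u_0}$. Since $\mathscr{L}$ is self-adjoint, its smallest singular value equals $\min|\sigma(\mathscr{L})|$. So it suffices to show that no eigenvalue of $\mathscr{L}$ lies in $(-1/\alpha_{\text{inv}},1/\alpha_{\text{inv}})$.

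The method is a Galerkin truncation plus a perturbation argument. Let $P_M$ be the $L^2$-orthogonal projection onto spherical harmonics of degree $\leq M$ and $Q_M=1-P_M$. With respect to $L^2=\operatorname{im}P_M\oplus\operatorname{im}Q_M$, write
\[
\mathscr{L}=\begin{pmatrix} A & B\\ B^* & D\end{pmatrix},
\qquad
A=P_M\mathscr{L}P_M,\quad B=-P_M V Q_M,\quad D=Q_M\mathscr{L}Q_M .
\]

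\textbf{The finite block $A$.} $A$ is a finite symmetric matrix with entries $\lambda(l)\delta-\int VY_{lm}Y_{l'm'}$. We compute these integrals rigorously by quadrature. Because $e^{2u_0}$ is not a polynomial, we first replace $V$ by $2KE_p(2u_0)$, which is a spherical polynomial and is integrated exactly by Gauss–Legendre. The tail $2K(e^{2u_0}-E_p(2u_0))$ is bounded in $L^\infty$ exactly as for $T_p$, and this bound is absorbed as an operator-norm perturbation. We then enclose the eigenvalues of the interval matrix $A$ in interval arithmetic, using Weyl's inequality on a floating-point diagonalisation, and obtain a gap $\mu_A=\min|\sigma(A)|$.

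\textbf{The tail block $D$ and the coupling $B$.} On $\operatorname{im}Q_M$ we have $\Delta\geq\lambda(M+1)$, and hence $D\geq\lambda(M+1)-\overline{\|V\|_{L^\infty}}$ by \cref{equation:V-bound}. For the coupling we use $\|B\|\leq\overline{\|V\|_{L^\infty}}$, or a sharper bound from the finite degree structure of the polynomial part of $V$.

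\textbf{Combining the blocks.} Take an eigenvector $u=(x,y)$ with eigenvalue $\mu$ and $|\mu|<\mu_0$. The second row gives $y=(\mu-D)^{-1}B^*x$ with $\|(\mu-D)^{-1}\|\leq 1/(\lambda(M+1)-\overline{\|V\|_{L^\infty}}-\mu_0)$. Substituting into the first row shows that $A-\mu+B(\mu-D)^{-1}B^*$ must be singular. This is excluded whenever
\[
\mu_A-\mu_0>\frac{\|B\|^2}{\lambda(M+1)-\overline{\|V\|_{L^\infty}}-\mu_0}.
\]
The largest admissible $\mu_0$ defines $1/\alpha_{\text{inv}}$.

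\textbf{Main obstacle.} The hard step is making the coupling term small. The crude bound $\|B\|\leq\|V\|_{L^\infty}$ is large because $e^{2\|u_0\|_\infty}$ may be sizable, so $M$ would need to be very large. What I would try first is this: the polynomial part of $V$ has degree $d=3+pN$, so $P_MVQ_M$ only couples degrees within $d$ of the cutoff, and its norm can be bounded by the tail of $V$'s expansion. If that is still insufficient, I would bound $\|B\|$ through the Frobenius norm of the finitely many nonzero interval coefficients.

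A second caveat concerns $\Delta$ itself. If $\mathscr{L}$ had a genuine kernel, the gap $\mu_A$ would be tiny. This should not happen here: $K$ is not constant, so the conformal Killing directions are not exact kernel elements. This is reflected numerically in $\mu_A$ being bounded away from zero.
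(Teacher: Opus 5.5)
Your argument is correct and has the same skeleton as the paper's proof. You split at degree $L$, bound the tail block below by $\gamma=\lambda(L+1)-\overline{\|{V}_{L^\infty}}$, bound the coupling by $\eta=\overline{\|{V}_{L^\infty}}$, and require the finite block to beat $\eta^2/\gamma$.

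\textbf{Where the routes differ.} The difference is in the final combination step. The paper does not use self-adjointness there:
\begin{itemize}
\item it bounds $\|{A^{-1}}_2$ by a midpoint inverse plus a von Neumann series, passing to $\sqrt{\|{\cdot}_1\|{\cdot}_\infty}$;
\item it gets the Schur complement bound $s=\alpha_{\text{low}}-\eta^2/\gamma$;
\item it then controls all four blocks of $\mathscr{L}^{-1}$ through the explicit block-inverse formula and the $2\times 2$ norm matrix $M$, setting $\alpha_{\text{inv}}=\sqrt{|M|_1|M|_\infty}$.
\end{itemize}
You instead use that $\mathscr{L}$ is self-adjoint with discrete spectrum. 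You exclude eigenvalues from $(-\mu_0,\mu_0)$ via the $\mu$-dependent Schur complement and enclose $\sigma(A)$ directly.

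\textbf{What each route buys.} Your route avoids the lossy passage through $M$ and gives a slightly sharper constant. With the tabulated $\overline{\|{A^{-1}}_2}$, $\eta$, $\gamma$ as inputs, your criterion $\mu_A-\mu_0>\eta^2/(\gamma-\mu_0)$ holds up to $\mu_0\approx 1.327$ (resp.\ $0.1410$). The paper's $1/\alpha_{\text{inv}}$ is only $\approx 1.297$ (resp.\ $0.1391$). The paper's route, conversely, needs only an approximate inverse of the midpoint matrix, not a verified symmetric eigensolver.

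\textbf{Matching the table's constant.} You redefine $\alpha_{\text{inv}}$ as your optimal $\mu_0$, which is not the table's value. To obtain the statement with the table's value, check your criterion at $\mu_0=1/\alpha_{\text{inv}}$; it passes with the numbers above.

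\textbf{Your ``main obstacle''.} It is not actually an obstacle. Since $\gamma$ grows like $L^2$ while $\eta$ stays fixed, the crude coupling bound $\eta=\overline{\|{V}_{L^\infty}}$ already suffices at $L=32$ and $L=52$. That is exactly what the paper uses. Your degree-based refinement would not help anyway, because the degree $3+pN$ of the polynomial part of $V$ far exceeds $L$.

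\textbf{Quadrature.} Your $E_p$-truncation of $V$, which makes the Gauss--Legendre quadrature of the entries of $A$ exact, is a legitimate way to produce the entrywise enclosures the paper leaves implicit. It is costly, since the integrand has degree roughly $3+pN+2L$.
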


\begin{proof}
Choose $L > 0$ big enough so that
\begin{align}
    \gamma
    :=
    \lambda(L+1)-\overline{\|{V}_{L^\infty}}
    >0,
\end{align}
where $\lambda(L+1)=(l+1)(l+2)$ denotes the eigenvalue of $\Delta$ acting on spherical harmonics of degree $L+1$, as introduced in \cref{subsection:spherical-harmonics},
and $\overline{\|{V}_{L^\infty}}$ is the potential bound from \cref{equation:V-bound}.
Define
\begin{align*}
    H_{\leq L} := \spann \{Y_{lm} : l \leq L \},
    \quad
    H_{> L} := \spann \{Y_{lm} : l > L \} = H_{\leq L}^\perp.
\end{align*}
Write
\begin{align}
\label{equation:L-decomposition}
\mathscr{L}
=
\begin{pmatrix}
A&C\\
C^*&D
\end{pmatrix}
\end{align}
with respect to the decomposition $H_{\leq L} \oplus H_{> L}$.

\textbf{Lower bound for $D$.}
For $u \in H_{>L}$ we have
\[
\|{\mathscr{L}u}_{L^2}
\geq
\|{\Delta u}_{L^2}
-
\|{V u}_{L^2}
\geq
\lambda(L+1) \|{u}_{L^2}
-
\overline{\|{V}_{L^\infty} \|{u}_{L^2}}
\geq
\gamma \cdot \|{u}_{L^2}.
\]
The value of $\gamma$ is computed in \code{_3_3_1_D_lower_bound}.

\textbf{Lower bound for $A$.}
The space $H_{\leq L}$ has dimension $d_L:=(L+1)^2$.
We will write $A$ for the matrix representation of $\mathscr{L}$ acting on the space $H_{\leq L}$.
Strictly speaking, we compute the entries of $A$ using quadrature and interval arithmetic, so the result is not one matrix in $\R^{d_L \times d_L}$, but an \emph{interval matrix}, which is a set of matrices in $\R^{d_L \times d_L}$, see \cite[Section 7]{Moore2009}.
That is:
we only know that the true matrix representation of $\mathscr{L}$ lies in the set of matrices where each entry is given by a small interval around our computed numerical value.

We use the non-standard notation that matrix norms for interval matrices are defined to be the maximum over all elements in an interval matrix and that the inverse of an interval matrix is the set of inverses of all matrices in the interval matrix.

We define
\[
A \subset \R^{d_L \times d_L},
\quad
A_{\alpha \beta}
:=
\int_{S^2}
\mathscr{L}(Y_\alpha) \cdot Y_\beta
\pm \text{enclosure},
\]
where $\alpha, \beta$ are multi-indices enumerating the spherical harmonics from \cref{theorem:spherical-harmonics}.
Let $m(A) \in A$ be the midpoint matrix of $A$, i.e. the matrix given by taking the midpoints of all intervals defining $A$ (see \cite[Section 7]{Moore2009}).
Let $B := m(A)^{-1}$ and $E := \Id - BA$, i.e. another interval matrix.
If $\|{E}_2 < 1$, then every element in $A$ is invertible and
\[
\|{A^{-1}}_2
\leq
\frac{\|{B}_2}{1- \|{E}_2}
\leq
\frac{\sqrt{\|{B}_1 \|{B}_{\infty}}}{1- \sqrt{\|{E}_1 \|{E}_\infty}}
=:
\overline{\|{A^{-1}}_2},
\]
where in the first step we used the von Neumann series for the inverse of a matrix,
and in the second step we used interpolation to change from $\|{\cdot}_2$, which is challenging to compute for interval matrices, to $\|{\cdot}_1 \|{\cdot}_\infty$, which is not sharp but trivially easy to compute.
The smallest singular value of $A$ is bounded from below by $\alpha_{\text{low}} := 1/\overline{\|{A^{-1}}_2}$ and is computed in \code{_3_3_1_A_lower_bound}.

\textbf{Upper bound for $C$.}
Let $u \in H_{>L}$ and let $P:L^2(S^2) \rightarrow H_{\leq L}$ be the $L^2$-orthogonal projection.
Then
\begin{align*}
    \|{P (\mathscr{L} (u)}_{L^2}
    \leq
    \|{P(\Delta u)-P(V \cdot u)}_{L^2}
    =
    \|{P(V \cdot u)}_{L^2}
    \leq
    \overline{\|{V}_{L^\infty}} \|{u}_{L^2}
    =:
    \eta \|{u}_{L^2},
\end{align*}
where in the second step we used $\Delta$ preserves the degree of $u$, i.e. $P(\Delta u)=0$.
The value of $\overline{\|{V}_{L^\infty}}$ has been computed before and is provided by \code{_3_3_1_eta_constant}.

\textbf{Combining the bounds}
Let $S:=A-CD^{-1}C^*:H_{\leq L} \rightarrow H_{\leq L}$ be the Schur complement of the decomposed operator $\mathscr{L}$ from \cref{equation:L-decomposition} of the block $D$.
Then for $u \in H_{\leq L}$:
\begin{align}
\begin{split}
    \|{Su}_{L^2}
    &\geq
    \|{Au}_{L^2}
    - \|{CD^{-1}C^* u}_{L^2}
    \geq
    \alpha_{\text{low}}
    \|{u}_{L^2}
    -
    \|{C}^2 \|{D^{-1}} \|{u}_{L^2}
    \\
    &
    \geq
    \left(
    \alpha_{\text{low}}
    -
    \frac{\overline{\|{V}_{L^\infty}}^2}{\gamma}
    \right)
    \|{u}_{L^2}
    =:
    s \|{u}_{L^2}.
\end{split}
\end{align}
If the factor $s$ (computed in \code{_3_3_1_s_bound}) on the right hand side is positive, then $S$ is invertible and the operator norm of its inverse is bounded by $s^{-1}$.
For large $L$ we expect that $\alpha_{\text{low}}$ tends towards the smallest singular value of $\mathscr{L}$ and $\gamma$ becomes large, so we indeed expect $S$ to be invertible for suitable $L$.
Assuming $S$ is invertible, we get
\[
\mathscr{L}^{-1}
=
\begin{pmatrix}
S^{-1} & -\,S^{-1} C D^{-1}\\[2mm]
-\,D^{-1} C^{*} S^{-1} & D^{-1}+D^{-1} C^{*} S^{-1} C D^{-1}
\end{pmatrix}
\]
by \cite[Exercise 3.7.11]{Meyer2023}.
Hence, for $(u,u')=\mathscr{L}^{-1}(v,v')$, we have
\begin{align*}
    \begin{pmatrix}
        \|{u}_{L^2}\\
        \|{u'}_{L^2}
    \end{pmatrix}
    \leq_{\text{cpt}}
    M
    \binom{||v||_{L^2}}{||v'||_{L^2}},
    \quad
    M:=
    \begin{pmatrix}
    s^{-1} & s^{-1}\eta\gamma^{-1}\\
    s^{-1}\eta\gamma^{-1} & \gamma^{-1}+\gamma^{-2}\eta^2 s^{-1}
    \end{pmatrix},
\end{align*}
where $\leq_{\text{cpt}}$ means that both entries of the two-dimensional vector satisfy the $\leq$ relation.
Taking the $2$-norm of the inequality of two-dimensional vectors above gives
\begin{align*}
    \|{\mathscr{L}^{-1}(v,v')}_{L^2}
    =
    \left|
    \begin{pmatrix}
        \|{u}_{L^2}\\
        \|{u'}_{L^2}
    \end{pmatrix}
    \right|_2
    \leq
    \left|
    M
    \binom{||v||_{L^2}}{||v'||_{L^2}}
    \right|_2
    \leq
    |M|_2
    \|{(v,v')}_{L^2}
    \leq
    \sqrt{|M|_1|M|_\infty}
    \|{(v,v')}_{L^2}.
\end{align*}
Thus,
\begin{align}
\label{equation:alpha-inv}
\|{\mathscr{L}^{-1}:L^2(S^2) \rightarrow L^2(S^2)}
\leq
\sqrt{|M|_1|M|_\infty}
=:
\alpha_{\text{inv}}.
\end{align}
The bound $\frac{1}{\alpha_{\text{inv}}}$ is a lower bound for the smallest singular value of $\mathscr{L}$ and is computed in \code{_3_3_1_L_singular_value_bound}.
\end{proof}

\subsubsection{A priori estimate}

\begin{proposition}
    For $u \in H^2(S^2)$ we have
    \begin{align}
    \label{equation:a-priori}
    \|{u}_{H^2}
    \leq
    C_{\text{priori,1}}
    \|{\mathscr{L} u}_{L^2}
    +
    C_{\text{priori,2}}
    \|{u}_{L^2},
    \end{align}
    where $C_{\text{priori,1}}$ and $C_{\text{priori,2}}$ are given in \cref{table:numerical-values}.
\end{proposition}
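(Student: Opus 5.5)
The plan is to reduce everything to the identity already established in the proof of \cref{proposition:sobolev-embedding}. There, integrating the Bochner formula with $\Ric=1$ gave $\|{\nabla^2 u}_{L^2}^2+\|{\nabla u}_{L^2}^2=\|{\Delta u}_{L^2}^2$. Hence the $H^2$-norm is controlled by $\|{u}_{L^2}$ and $\|{\Delta u}_{L^2}$ alone. Concretely, since $\|{\nabla u}_{L^2}\leq\|{\Delta u}_{L^2}$ and $\|{\nabla^2 u}_{L^2}\leq \|{\Delta u}_{L^2}$, we get
\[
\|{u}_{H^2}
=\|{u}_{L^2}+\|{\nabla u}_{L^2}+\|{\nabla^2 u}_{L^2}
\leq \|{u}_{L^2}+\sqrt{2}\,\|{\Delta u}_{L^2}.
\]
Here we use $a+b\leq\sqrt{2}\sqrt{a^2+b^2}$ for the two gradient terms. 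If instead the square-sum convention $\|{u}^2_{H^2}=\|{u}^2_{L^2}+\|{\Delta u}^2_{L^2}$ from that proof is used, the constant in front of $\|{\Delta u}_{L^2}$ becomes $1$. I would state the estimate for whichever convention the later fixed point argument uses, and note that it is consistent with \cref{proposition:sobolev-embedding}.

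Next I write $\Delta u=\mathscr{L}u+Vu$ with $V=2Ke^{2u_0}$. The triangle inequality and the potential bound \cref{equation:V-bound} then give
\[
\|{\Delta u}_{L^2}\leq \|{\mathscr{L}u}_{L^2}+\overline{\|{V}_{L^\infty}}\,\|{u}_{L^2}.
\]
Substituting this into the previous display yields \cref{equation:a-priori} with
\[
C_{\text{priori,1}}=\sqrt{2},
\qquad
C_{\text{priori,2}}=1+\sqrt{2}\,\overline{\|{V}_{L^\infty}}.
\]
For the square-sum convention, the $\sqrt{2}$ factors are replaced by $1$. The only numerical input is $\overline{\|{V}_{L^\infty}}$, which has already been rigorously bounded in interval arithmetic. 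So this step amounts to one extra line of verified computation.

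The only step that needs care is justifying the Bochner identity for $u\in H^2$ rather than for smooth $u$. I would prove it first for finite sums of spherical harmonics, or for smooth $u$. I would then extend it by density of $C^\infty(S^2)$ in $H^2(S^2)$, using that both sides are continuous in the $H^2$-norm. One further point is the sign convention: the identity must be read with $\Delta$ the nonnegative Laplacian, as in $\mathscr{L}u=\Delta u-Vu$ and the eigenvalues $\lambda(l)=l(l+1)$. I do not expect a serious obstacle beyond this.

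Optionally, the estimate can be sharpened using the eigenvalue bound $\|{u}_{L^2}\leq\alpha_{\text{inv}}\|{\mathscr{L}u}_{L^2}$ to absorb the $L^2$ term. However, the statement keeps the two terms separate, so I would present the simple version above.
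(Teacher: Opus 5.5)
Your proposal is correct and matches the paper's proof. Both integrate the Bochner identity with $\Ric=1$ and bound $\|{\nabla u}_{L^2}+\|{\nabla^2 u}_{L^2}\leq\sqrt{2}\,\|{\Delta u}_{L^2}$, then write $\Delta u=\mathscr{L}u+Vu$, giving $C_{\text{priori,1}}=\sqrt{2}$ and $C_{\text{priori,2}}=1+\sqrt{2}\,\overline{\|{V}_{L^\infty}}$ as in the table; your density extension from smooth $u$ to $u\in H^2(S^2)$ is a small point the paper leaves implicit.
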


\begin{proof}
Integrating the Bochner formula \cite[Equation 4.5.3]{Jost2005} and using $\Ric=1$ on $S^2$ gives
\[
\|{\nabla^2 u}_{L^2}^2+\|{\nabla u}_{L^2}^2
=
\|{\Delta u}_{L^2}^2
\]
for $u \in C^\infty(S^2)$.
Taking the square root and using the equivalence of $1$-norm and $2$-norm gives
\[
\|{\nabla^2 u}_{L^2}+\|{\nabla u}_{L^2}
\leq
\sqrt{2}
\sqrt{
\|{\nabla^2 u}_{L^2}^2+\|{\nabla u}_{L^2}^2
}
\leq
\sqrt{2}\|{\Delta u}_{L^2}.
\]
Adding $\|{u}_{L^2}$ on both sides and using
$\Delta u=\Delta u-Vu+Vu=\mathscr{L}u+Vu$ yields the a priori estimate:
\begin{align*}
\|{u}_{H^2}
\leq
\sqrt{2} \|{\mathscr{L} u}_{L^2}
+
(1+\sqrt{2} \overline{\|{V}_{L^\infty})} \|{u}_{L^2}
=:
C_{\text{priori,1}}
\|{\mathscr{L} u}_{L^2}
+
C_{\text{priori,2}}
\|{u}_{L^2}.
\end{align*}
The constants $C_{\text{priori,1}},C_{\text{priori,2}}$ are computed in \code{_3_3_2_a_priori_constants}.
\end{proof}

\subsubsection{Obtaining the injectivity estimate}

\begin{corollary}
    For $u \in H^2(S^2)$ we have
    \begin{align}
        \|{u}_{H^2}
        \leq
        C_{\text{injectivity}}
        \|{\mathscr{L} u}_{L^2},
    \end{align}
    where $C_{\text{injectivity}}$ is given in \cref{table:numerical-values}.
\end{corollary}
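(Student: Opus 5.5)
The plan is to combine the a priori estimate \cref{equation:a-priori} with the eigenvalue bound from the preceding proposition, which controls $\|{u}_{L^2}$ by $\|{\mathscr{L}u}_{L^2}$. The resulting constant will be
\[
C_{\text{injectivity}}
:=
C_{\text{priori,1}}
+
C_{\text{priori,2}}\,\alpha_{\text{inv}},
\]
and its upper bound will be evaluated in interval arithmetic from the previously computed upper bounds for $\overline{\|{V}_{L^\infty}}$ and $\alpha_{\text{inv}}$.

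\textbf{Step 1: the $L^2$ bound.} By \cref{equation:alpha-inv}, $\mathscr{L}$ is invertible as an operator on $L^2(S^2)$ with $\|{\mathscr{L}^{-1}} \leq \alpha_{\text{inv}}$. For $u \in H^2(S^2)$ we have $\mathscr{L}u \in L^2$, since $\Delta u \in L^2$ and $V$ is bounded. Applying the bound to $v = \mathscr{L}u$ gives
\[
\|{u}_{L^2} \leq \alpha_{\text{inv}} \|{\mathscr{L}u}_{L^2}.
\]

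\textbf{Step 2: combine.} Substituting this into \cref{equation:a-priori} gives
\[
\|{u}_{H^2}
\leq
C_{\text{priori,1}}\|{\mathscr{L}u}_{L^2}
+
C_{\text{priori,2}}\,\alpha_{\text{inv}}\|{\mathscr{L}u}_{L^2}
=
C_{\text{injectivity}}\|{\mathscr{L}u}_{L^2}.
\]

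\textbf{Main obstacle: extending from the truncated spaces to all of $H^2$.} The singular value bound was derived using the block decomposition over $H_{\leq L}\oplus H_{>L}$, and the bounds on $D$ and on the Schur complement were argued on these subspaces. The step "if $S$ and $D$ are invertible then $\mathscr{L}$ is invertible with the displayed inverse" needs $\mathscr{L}$ to be treated as a closed operator with domain $H^2$. I would handle this as follows:

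\begin{enumerate}
\item $D$ restricted to $H_{>L}\cap H^2$ is bounded below by $\gamma$.
\item $D$ is self-adjoint, being a bounded perturbation of $\Delta$ compressed to $H_{>L}$, so it is bijective onto $H_{>L}$.
\item The block formula then inverts $\mathscr{L}$ from $L^2$ onto $H^2$.
\item Consequently, for every $u \in H^2$, $u = \mathscr{L}^{-1}\mathscr{L}u$, which justifies Step 1.
\end{enumerate}

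The same argument can be made more simply by density: the estimate $\|{u}_{L^2}\leq\alpha_{\text{inv}}\|{\mathscr{L}u}_{L^2}$ for smooth $u$ extends to $H^2$ because $\mathscr{L}:H^2\to L^2$ is continuous and $C^\infty$ is dense in $H^2$.

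Finally, the a priori estimate was proved for smooth $u$ via the Bochner formula. It extends to $H^2$ by the same density argument, since both sides are continuous in the $H^2$ norm.
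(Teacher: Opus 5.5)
Your proof is correct and is the paper's argument: the bound $\|{u}_{L^2}\leq\alpha_{\text{inv}}\|{\mathscr{L}u}_{L^2}$ from \cref{equation:alpha-inv} is substituted into \cref{equation:a-priori}, which gives $C_{\text{injectivity}}=C_{\text{priori,1}}+C_{\text{priori,2}}\,\alpha_{\text{inv}}$. Your extra justification (self-adjointness of $D$ gives its surjectivity, so the block formula is a genuine inverse from $L^2$ onto $H^2$, and density handles the Bochner step) fills in points the paper leaves implicit. Note, though, that the density remark only handles regularity and does not replace items 1--3, because $CD^{-1}C^*$ is only defined once $D$ is known to be onto.
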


\begin{proof}
The singular value bound $\frac{1}{\alpha_{\text{inv}}}$ from \cref{equation:alpha-inv} gives $\|{u}_{L^2} \leq \alpha_{\text{inv}} \|{\mathscr{L} u}_{L^2}$.
Plugging this into \cref{equation:a-priori} gives the injectivity estimate:
\begin{align}
    \|{u}_{H^2}
    \leq
    \left(
    C_{\text{priori,1}}
    +
    C_{\text{priori,2}}\alpha_{\text{inv}}
    \right)
    \|{\mathscr{L} u}_{L^2}
    =:
    C_{\text{injectivity}}
    \|{\mathscr{L} u}_{L^2}.
\end{align}
The constant $C_{\text{injectivity}}$ is computed in \code{_3_3_3_injectivity_constant}.
\end{proof}

\subsection{Nonlinear estimate}

\begin{proposition}
    For $v_1,v_2 \in B(r) \subset H^2(S^2)$ we have
    \[
    \|{\mathscr{N}v_1-\mathscr{N}v_2}_{L^2}
    \leq
    C_{\text{nonlinear}}(r)
    \|{v_1-v_2}_{L^2},
    \]
    where $C_{\text{nonlinear}}(r)=
    4r
    \overline{\|{K}_{L^\infty}}
    e^{2 \overline{\|{u_0}_{L^\infty}}}
    C_{\text{emb}}
    e^{2C_{\text{emb}} r }$.
\end{proposition}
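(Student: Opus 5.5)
The plan is to bound the difference pointwise and then integrate, so that only the $L^2$ norm of $v_1-v_2$ survives on the right. By \cref{equation:linearisation-higher-order-def}, with $f(t):=e^{2t}-1-2t$,
\[
\mathscr{N}v_1-\mathscr{N}v_2
=
-Ke^{2u_0}\bigl(f(v_1)-f(v_2)\bigr).
\]
Since $f'(t)=2(e^{2t}-1)$, the mean value theorem applied at each point $x\in S^2$ gives
\[
|f(v_1(x))-f(v_2(x))|
\leq
2\sup_{|t|\leq \max(|v_1(x)|,|v_2(x)|)}|e^{2t}-1|\cdot|v_1(x)-v_2(x)|.
\]

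To control the factor $|e^{2t}-1|$, use $|e^{2t}-1|\leq 2|t|e^{2|t|}$, which follows from the mean value theorem once more. Bounding $|t|$ by $\max_i\|{v_i}_{L^\infty}$ then gives
\[
|f(v_1)-f(v_2)|
\leq
4\max_i\|{v_i}_{L^\infty}\,e^{2\max_i\|{v_i}_{L^\infty}}\,|v_1-v_2|
\]
pointwise.

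Next, invoke \cref{proposition:sobolev-embedding}. Because $v_i\in B(r)\subset H^2(S^2)$, this yields $\|{v_i}_{L^\infty}\leq C_{\text{emb}}\|{v_i}_{H^2}\leq C_{\text{emb}}r$. The pointwise factor is therefore at most
\[
4C_{\text{emb}}\,r\,e^{2C_{\text{emb}}r}.
\]

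It remains to handle the prefactor and integrate. Bound $|K e^{2u_0}|\leq \overline{\|{K}_{L^\infty}}\,e^{2\overline{\|{u_0}_{L^\infty}}}$ using the bounds from \cref{subsection:rigorous-bounds}. Squaring the pointwise estimate and integrating over $S^2$ then gives
\[
\|{\mathscr{N}v_1-\mathscr{N}v_2}_{L^2}
\leq
4r\,\overline{\|{K}_{L^\infty}}\,e^{2\overline{\|{u_0}_{L^\infty}}}\,C_{\text{emb}}\,e^{2C_{\text{emb}}r}\,\|{v_1-v_2}_{L^2},
\]
which is exactly $C_{\text{nonlinear}}(r)\|{v_1-v_2}_{L^2}$.

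The main obstacle is only bookkeeping, namely that the constants match exactly. The $2$ from $f'$ and the $2$ from $|e^{2t}-1|\leq 2|t|e^{2|t|}$ must combine to give the factor $4$. The interval enclosure of $u_0$ also needs care: the bounds should hold for every element $u\in u_0$, which is true because the upper bound $\overline{\|{u_0}_{L^\infty}}$ is valid uniformly over the interval.
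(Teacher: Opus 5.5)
Your proof is correct and follows the paper's argument. Both use a pointwise mean-value estimate for $e^{2t}-1-2t$, then \cref{proposition:sobolev-embedding} to replace $\|{v_i}_{L^\infty}$ by $C_{\text{emb}}r$, then the bounds $\overline{\|{K}_{L^\infty}}$ and $e^{2\overline{\|{u_0}_{L^\infty}}}$ for the prefactor. The only difference is cosmetic: the paper quotes the slightly sharper pointwise inequality $|e^{2a}-2a-(e^{2b}-2b)|\leq 2e^{2\max\{|a|,|b|\}}(|a|+|b|)\,|a-b|$ instead of your $4\max\{|a|,|b|\}\,e^{2\max\{|a|,|b|\}}\,|a-b|$, but after $|a|+|b|\leq 2C_{\text{emb}}r$ both give exactly $C_{\text{nonlinear}}(r)$.
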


\begin{proof}
Using
$|e^{2a}-2a-(e^{2b}-2b)| \leq 2e^{2\max \{|a|,|b|\}} (|a|+|b|)|a-b|$
for any $a,b \in \R$, we have:
\begin{align*}
    \|{\mathscr{N}v_1-\mathscr{N}v_2}_{L^2}
    &\leq
    \|{K}_{L^\infty}
    \|{e^{2u_0}}_{L^\infty}
    \|{e^{2v_1}-2v_1-(e^{2v_2}-2v_2)}_{L^2}
    \\
    &\leq
    \|{K}_{L^\infty}
    \|{e^{2u_0}}_{L^\infty}
    2 e^{2\max \{ \|{v_1}_{L^\infty}, \|{v_2}_{L^\infty} \} }
    (\|{v_1}_{L^\infty}+\|{v_2}_{L^\infty})
    \|{v_1-v_2}_{L^2}
    \\
    &\leq
    \|{K}_{L^\infty}
    \|{e^{2u_0}}_{L^\infty}
    2 C_{\text{emb}}
    e^{2C_{\text{emb}} \max \{ \|{v_1}_{H^2}, \|{v_2}_{H^2} }
    (\|{v_1}_{H^2}+\|{v_2}_{H^2})
    \|{v_1-v_2}_{L^2}
    \\
    &\leq
    \|{K}_{L^\infty}
    \|{e^{2u_0}}_{L^\infty}
    2 C_{\text{emb}}
    e^{2C_{\text{emb}} r }
    (2r)
    \|{v_1-v_2}_{L^2},
\end{align*}
where in the third step we used \cref{proposition:sobolev-embedding} and in the fourth step we used $v_1,v_2 \in B(r) \subset H^2(S^2)$, i.e. $\|{v_1}_{H^2}<r$, $\|{v_2}_{H^2}<r$.
\end{proof}

The $r$-dependent constant is computed in \code{_3_4_nonlinear_constant}.

\subsection{Applying the fixed point theorem}

\begin{proof}[Proof of \cref{theorem:main-existence}]
Define the ``Newton map" $T: B(r) \rightarrow B(r)$ for $B(r) \subset H^2(S^2)$ as $T(v) := -\mathscr{L}^{-1}(\mathscr{F}(u_0)+\mathscr{N}(v))$.
For this to be well-defined we must have $T(u) \in \overline{B(r)}$ if $u \in \overline{B(r)}$, i.e.
\begin{align}
    \label{equation:newton-check1}
    \begin{split}
        \|{Tu}_{H^2}
        &\leq
        C_{\text{injectivity}}
        (\overline{\|{\mathscr{F} u_0}_{L^2}}
        +
        C_{\text{nonlinear}}(r) r)
        =:
        C_{\text{selfmap}}(r)
        \overset{!}{\leq} r.
    \end{split}
\end{align}
The map $T$ is a contraction if, for $u_1,u_2 \in H^2(B^2):$
\begin{align}
    \label{equation:newton-check2}
    \|{Tu_1-Tu_2}_{H^2}
    &\leq
    C_{\text{injectivity}} C_{\text{nonlinear}}(r)
    \|{u_1-u_2}_{H^2}
    =:
    C_{\text{contraction}}
    \|{u_1-u_2}_{H^2}
    \overset{!}{<} \|{u_1-u_2}_{H^2}.
\end{align}
Checking the condition $C_{\text{selfmap}}(r) \leq r$ from \cref{equation:newton-check1} and the condition $C_{\text{contraction}}<1$ from \cref{equation:newton-check2} is done in the method \code{_3_5_newton_map_check} and a value of $r$ satisfying this is given in \cref{table:numerical-values}.

If both conditions are satisfied, then $T$ has a unique fixed point by Banach's fixed point theorem.
A fixed point of $T$ corresponds to a solution $u$ of \cref{equation:u0-linear-combi}, proving the existence part of \cref{theorem:main-existence}.
Because the two approximate solutions $u_0$ contain one function that is invariant under $R T_d R^{-1}$ and $R S_3 R^{-1}$ respectively by \cref{proposition:symmetry}, and $T$ is invariant under $T_d$, the uniqueness statement of Banach's fixed point theorem proves that the solutions $u$ have at least the same symmetry.

It remains to check that they do not have more symmetry.
First, let $u$ be a solution to \cref{theorem:main-existence} and $g \in O(3)$ such that $g^* u=u$.
Then
\[
Ke^{2u}
=
1-\Delta u
=
g^*(1-\Delta u)
=
(g^* K) e^{2u},
\]
hence $g^*K=K$.
This shows $\Stab_{O(3)}(u) \subset T_d$ for any solution $u$.
Let $G \in \{ R T_d R^{-1}, R S_3 R^{-1} \} $ be the expected symmetry group from \cref{subsection:finding-an-approximate}.
For one of the two approximate solutions $u_0$ we have $G \neq R T_d R^{-1}$, so we must check for symmetry under $g \in R T_d R^{-1} \setminus G$.
Then
\[
\|{u-g^*u}_{L^\infty}
\geq
\|{u-u_0}_{L^\infty}
+
\|{u_0-g^* u_0}_{L^\infty}
+
\|{g^* u_0-g^* u}_{L^\infty}
\geq
-2 r C_{\text{emb}}
+\xi
>0,
\]
where in the last step we compared the explicit values for $r,\xi,C_{\text{emb}}$.
\end{proof}

\appendix

\section{Numerical values}
\label{section:appendix-1}

The values for the numerical constants are shown in \cref{table:numerical-values}.
In the table, four digits per number are given.
Internally, higher accuracy is used for computations.

\begin{table}[H]
    \centering
    \caption{Certified numerical constants for the two approximate solutions.}
    \label{table:numerical-values}
    \begin{tabular}{l
                    S[table-format=4.4e2]
                    S[table-format=4.4e2]}
        \toprule
         & {$T_d$ symmetry} & {$S_3$ symmetry} \\
        \midrule
        $C_{\mathrm{emb}}$ & 1.1890517449 & 1.1890517449 \\
        $\xi$ & \multicolumn{1}{c}{---} & 0.1151874280 \\
        $N$ & 44 & 72 \\
        $\overline{\|{K}_{L^2}}$ & 1.0 & 1.0 \\
        $\overline{\|{K}_{L^\infty}}$ & 0.5562984315 & 0.5562984315 \\
        $\overline{\|{u_0}_{L^\infty}}$ & 1.5765413397 & 1.7834303808 \\
        $\overline{\|{\mathscr{F}(u_0)}_{L^2}}$ & 2.1999472131e-8 & 2.2163910321e-12 \\
        $\overline{\|{V}_{L^\infty}}$ & 26.0437932936 & 39.3917955859 \\
        $L$ & 32 & 52 \\
        $\gamma$ & 1095.9562067064 & 2822.6082044141 \\
        $\overline{\|{A^{-1}}_2}$ & 0.5138063021 & 1.4476618829 \\
        $\eta$ & 26.0437932936 & 39.3917955859 \\
        $\alpha_{\mathrm{inv}}$ & 0.7712744064 & 7.1899289906 \\
        $C_{\mathrm{priori},1}$ & 1.4142135624 & 1.4142135624 \\
        $C_{\mathrm{priori},2}$ & 37.8314856915 & 56.7084115638 \\
        $C_{\mathrm{injectivity}}$ & 30.5926702316 & 409.1436658767 \\
        $r$ & 3.1622776602e-4 & 1.7782794100e-5 \\
        $C_{\mathrm{selfmap}}(r)$ & 1.9029076628e-4 & 1.2121691214e-5 \\
        $C_{\mathrm{contraction}}$ & 0.5996239548 & 0.6816017957 \\
        \bottomrule
    \end{tabular}
\end{table}

\bibliographystyle{amsplain}
\bibliography{library}

\vskip 8pt

\noindent{\small\sc Imperial College London, Department of Mathematics, 180 Queen's Gate, South Kensington, London SW7 2RH, the United Kingdom 

\noindent E-mail: {\tt \href{mailto:d.platt@imperial.ac.uk}{d.platt@imperial.ac.uk}}

\end{document}